\title{(Non-)Recognizing Spaces for Stable Subgroups}
 \author{Sahana Balasubramanya, Marissa Chesser, Alice Kerr, \\ 
Johanna Mangahas, Marie Trin}
\date{\today}
\newcommand{\z}{\mathbb{Z}}
\newcommand{\acts}{\curvearrowright}
\newcommand{\Ga}{\Gamma}
\newtheorem{thm}{Theorem}[section]
\newtheorem{cor}[thm]{Corollary}
\newtheorem{lem}[thm]{Lemma}
\newtheorem{prop}[thm]{Proposition}
\newtheorem{ques}[thm]{Question}
\theoremstyle{definition}
\newtheorem{defn}[thm]{Definition}
\newtheorem{ex}[thm]{Example}
\theoremstyle{remark}
\newtheorem{rem}[thm]{Remark}
\begin{document}

\maketitle
\abstract{In this note, we consider the notion of what we call \emph{recognizing spaces} for stable subgroups of a given group. When a group $G$ is a mapping class group or right-angled Artin group, it is known that a subgroup is stable exactly when the largest acylindrical action $G \acts X$ provides a quasi-isometric embedding of the subgroup into $X$ via the orbit map.  In this sense the largest acylindrical action for mapping class groups and right-angled Artin groups provides a recognizing space for all stable subgroups. In contrast, we construct an acylindrically hyperbolic group (relatively hyperbolic, in fact) whose largest acylindrical action does not recognize all stable subgroups.}

\section{Introduction}
The quasiconvex subgroups of a hyperbolic group are geometrically natural in that they are exactly the finitely generated quasi-isometrically embedded subgroups (meaning, their geometry reflects that of the ambient group), and consequently they are hyperbolic in their own right \cite{Gromov}.  These ideas from Gromov have been widely studied in geometric group theory, and the notion of quasiconvexity now has multiple generalizations to broader settings; see, for example, \cite{FM, DT, Gen, Tran, AM}.

In this note we are interested in the generalization to \emph{stable subgroups} of \emph{acylindrically hyperbolic groups}, specifically those groups admitting a \emph{largest acylindrical action}; exact definitions of these terms appear in Sections \ref{subsec:stablesubgrps}, \ref{subsec:ahgrps}, and \ref{subsec:largest} respectively.  Informally, a subgroup $H$ of a finitely generated group $G$ is stable when $H$ is quasi-isometrically embedded in $G$ and furthermore, quasi-geodesics from $G$ with common endpoints in $H$ uniformly fellow-travel.  It follows from their definition that stable subgroups are hyperbolic, while the ambient $G$ typically is not.  Acylindrical hyperbolicity vastly generalizes hyperbolicity for groups, as the property requires only some suitable action of $G$ on some hyperbolic space $X$. The set of all such actions for a given group form a partially ordered set, which in certain cases admits a largest element.

The notion of stable subgroups was introduced by Durham and Taylor \cite{DT} as a group-theoretic characterisation of the convex cocompact subgroups of mapping class groups \cite{FM}. This class of subgroups has turned out to be a very natural one to consider; in mapping class groups they can be equivalently characterised as the undistorted purely pseudo-Anosov subgroups \cite{BestvinaBrombergKentLeininger}, in right-angled Artin groups they are the purely loxodromic subgroups \cite{RAAGstable}, and for general finitely generated groups they are the boundary convex cocompact groups \cite{CordesDurham}. They are also an example of Morse (or strongly quasiconvex) subgroups \cite{Gen,Tran}, and in fact in mapping class groups they are equivalent to the Morse subgroups of infinite index \cite{HKim}.

A common characterisation of stable subgroups is via considering some well understood action of the group on a hyperbolic space; in several cases the stable subgroups have been shown to be exactly those subgroups quasi-isometrically embedded by the action. We are interested in a slightly broader scenario, which is the existence of a \emph{recognizing space} for a stable subgroup of a finitely generated group $G$, which we define below. This can be compared to a Morse detectable space, as defined in \cite{Russell2022}.

\begin{defn}
    Let $X$ be a hyperbolic space equipped with a $G$--action. If $H$ is a stable subgroup of $G$, we say $X$ is a \emph{recognizing space} for $H$ if the orbit map quasi-isometrically embeds $H$ into $X$.  We call $X$ a \emph{universal recognizing space} for $G$ if it is  a recognizing space for \emph{all} the stable subgroups of $G$.
\end{defn}

Mapping class groups, right angled Artin groups (RAAGs), and, more generally, hierarchically hyperbolic groups (HHGs) each have a well-understood universal recognizing space. Respectively, these are the curve complex \cite[pg. 3]{Hamenstaedt2005WordHE} \cite[Theorem 1.3]{MR2465691}, extension graph \cite[Theorem 1.1]{RAAGstable} and the top level domain of the hierarchy structure \cite[Theorem B]{MR4215647}. It has recently been shown that the genus two handlebody group also has a universal recognizing space \cite{ChesserHandlebody}, as do CAT(0) groups \cite{CAT0}, and more generally groups acting geometrically on a Morse-dichotomous space \cite{Zbinden,PetytZalloum}.

\begin{rem}
    In each of the above examples, it is known that a subgroup is stable if and only if it is quasi-isometrically embedded in the relevant universal recognising space. This also holds when $G$ acts properly on a proper universal recognising space, by a theorem of Aougab--Durham--Taylor \cite[Theorem 1.1]{ADT}.
\end{rem}

In addition to being HHGs, most mapping class groups and RAAGs are also acylindrically hyperbolic groups. In these cases, the recognizing space admits the largest acylindrical action (see Section \ref{subsec:ahgrps} for details), which does not always exist for an acylindrically hyperbolic group. This prompts the question:

\begin{ques}\label{ques1} If $G$ is an acylindrically hyperbolic group with a largest acylindrical action on $X$, is $X$ a universal recognizing space for $G$?
\end{ques}

In other words, does the largest acylindrical action of an acylindrically hyperbolic group, if it exists, contain quasi-isometrically embedded orbits of all of its stable subgroups?  We consider this question for the class of (non-elementary) \emph{relatively hyperbolic groups}, a class of groups introduced by Gromov \cite{Gromov} that generalizes hyperbolic groups. Roughly speaking, the ``non-hyperbolic pieces" of such a group are confined to subgroups known as peripheral subgroups, and coning off the cosets of these peripheral subgroups in a Cayley graph for the group yields a hyperbolic space called the \emph{relative Cayley graph}. In the case that the peripheral subgroups are neither acylindrically hyperbolic nor virtually cyclic, the action on this space is the largest acylindrical action (see Section \ref{subsec:largest} for details).

Aougab--Durham--Taylor prove equivalence between subgroup stability and quasi-isometric embedding by the orbit map into the relative Cayley graph, in the case where the peripheral subgroups all are one-ended and have linear divergence \cite[Theorem 1.5]{ADT}.  Such peripheral groups cannot be acylindrically hyperbolic, as the latter groups contain Morse elements and thus have superlinear divergence \cite{DMS}.  Thus Question \ref{ques1} has a positive answer for a subset of relatively hyperbolic groups, in addition to the HHGs previously mentioned.  Furthermore, it was observed by Abbott and Chesser that, if a stable subgroup of a relatively hyperbolic group has finite intersection with each conjugate of every peripheral subgroup, then it quasi-isometrically embeds in the relative Cayley graph via the orbit map.\footnote{This follows from \cite{MR2684983} Theorem 1.5, Definition 9.6, and Theorem 9.9, along with the fact that stable subgroups are undistorted in the ambient group.}.  We are therefore motivated to ask the following refined question.

\begin{ques}\label{ques2} For a relatively hyperbolic group whose peripheral subgroups are neither acylindrically hyperbolic nor virtually cyclic, are there stable subgroups not quasi-isometrically embedded into the relative Cayley graph via the orbit map? 
\end{ques}

We consider such subgroups \emph{not recognized} by this action.  We provide an affirmative answer to Question \ref{ques2} by constructing a particular relatively hyperbolic group, thereby answering Question \ref{ques1} in the negative. The group we construct is based on a construction of Ol'shanskii, Osin, and Sapir \cite[Theorem 1.12]{OOS}. More precisely, we prove the following.

\begin{thm}\label{thm:main} There exists a non-elementary relatively hyperbolic (and hence acylindrically hyperbolic) group $G$ such that \begin{enumerate}
    \item $G$ has a largest acylindrical action on its relative Cayley graph; and 
    \item $G$ contains stable subgroups that do not quasi-isometrically embed via the orbit map in the relative Cayley graph.
\end{enumerate} 
\end{thm}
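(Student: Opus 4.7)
The plan is to build $G$ as a free product $G = H * F$ of a carefully chosen finitely generated group $H$ with a nonabelian free group $F$, so that $G$ is hyperbolic relative to $\{H\}$. The peripheral $H$ must satisfy two key properties: it is neither virtually cyclic nor acylindrically hyperbolic, and it contains an infinite-order element $h$ whose cyclic subgroup $\langle h \rangle$ is stable (i.e., Morse and quasi-isometrically embedded) in $H$. Granting such an $H$, part (1) of the theorem follows from the criterion recalled in Section~\ref{subsec:largest}: when peripherals are neither virtually cyclic nor acylindrically hyperbolic, the action on the relative Cayley graph is the largest acylindrical action. For part (2), the cyclic subgroup $\langle h \rangle$ will serve as a stable subgroup whose orbit in the relative Cayley graph is bounded, simply because all of $H$ gets coned off to bounded diameter.

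The peripheral group $H$ is to be produced by adapting the construction of Ol'shanskii--Osin--Sapir, \cite{OOS}~Theorem~1.12: realize $H$ as a direct limit of hyperbolic groups $H_0 \twoheadrightarrow H_1 \twoheadrightarrow \cdots$ via graded small-cancellation quotients whose injectivity radii grow fast enough that all asymptotic cones of $H$ are $\mathbb{R}$--trees (forcing lacunary hyperbolicity and, with suitable arrangement, the absence of any acylindrical action with unbounded orbits on a hyperbolic space). A fixed loxodromic element $h \in H_0$ must survive as a Morse, quasi-isometrically embedded cyclic subgroup in $H$, which can be arranged by choosing the small-cancellation relators disjoint from a controlled neighborhood of the $h$--axis at each stage. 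Having at least one independent loxodromic generator in $H_0$ simultaneously ensures that $H$ is not virtually cyclic.

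Given $H$, set $G := H * F_2$; it is non-elementary and hyperbolic relative to $\{H\}$ in the standard way. Condition (1) is then immediate. For condition (2), I must transfer the stability of $\langle h\rangle$ from $H$ to $G$. Quasi-isometric embedding follows because peripheral subgroups are undistorted in relatively hyperbolic groups and $\langle h \rangle$ is undistorted in $H$. For the Morse property in $G$, I would invoke the bounded coset penetration property of Drutu--Sapir and Osin: any $(\lambda,c)$--quasi-geodesic of $G$ joining two points of $H$ stays within a distance $R = R(\lambda,c)$ of $H$; projecting it to $H$ yields a quasi-geodesic there, and Morseness of $\langle h\rangle$ in $H$ then forces fellow-traveling with $\langle h\rangle$ back in $G$. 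Finally, the orbit $\langle h\rangle \cdot x_0$ in the relative Cayley graph sits inside the coned-off image of $H$, hence has diameter at most $1$, so the orbit map cannot be a quasi-isometric embedding.

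The chief obstacle is the construction of the peripheral $H$: one needs the OOS machinery to simultaneously yield lacunary hyperbolicity (to preclude acylindrical hyperbolicity), non-virtual-cyclicity, and a genuinely Morse, infinite cyclic subgroup $\langle h\rangle$ surviving through the direct limit. This is a delicate balance, since the small-cancellation relations used to force the asymptotic cones to be trees can easily destroy the Morse axis of $h$. Once $H$ is built, the relative-hyperbolicity arguments upgrading stability from $H$ to $G$ are standard, and the orbit boundedness in the relative Cayley graph is tautological.
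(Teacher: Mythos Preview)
Your overall architecture---take $G$ to be a free product of a free group with an OOS-type group $H$ that is not acylindrically hyperbolic yet has an infinite Morse cyclic subgroup---matches the paper's proof. The paper uses $H*\mathbb{Z}$ rather than $H*F_2$, and transfers stability via Tran's characterization (Corollary~\ref{cor:transferstable}) rather than your direct bounded-coset-penetration argument, but these are minor variations and your versions would work.

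There is, however, a genuine error and an unnecessary complication in your treatment of $H$. First, your stated reason that $H$ is not acylindrically hyperbolic is wrong: lacunary hyperbolicity (equivalently, some asymptotic cone being an $\mathbb{R}$--tree) does \emph{not} preclude acylindrical hyperbolicity---every non-elementary hyperbolic group is both lacunary hyperbolic and acylindrically hyperbolic. Your phrase ``with suitable arrangement'' does not name any mechanism that would actually obstruct acylindrical actions. The paper's argument is much simpler and correct: the OOS group of \cite[Theorem~1.12]{OOS} has \emph{every proper subgroup infinite cyclic}, hence contains no nonabelian free subgroup, hence is not acylindrically hyperbolic by ping-pong.

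Second, you propose to \emph{adapt} the OOS construction so that a chosen axis survives as Morse through the limit, and you flag this as the ``chief obstacle.'' In fact no adaptation is needed: \cite[Theorem~1.12]{OOS} already asserts, as a black box, that in the resulting group every periodic path in the Cayley graph is a Morse quasi-geodesic. So every nontrivial cyclic subgroup $\langle q\rangle$ is automatically stable in $H$, and there is nothing delicate to arrange. Citing the theorem directly collapses your ``chief obstacle'' to a single sentence.
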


To the best of our knowledge, this is the first such example to be recorded in the literature. This also prompts some questions for further exploration. 

\begin{ques}\label{ques3} Given a relatively hyperbolic group, does it admit a (not necessarily acylindrical) action on some universal recognizing space? 
\end{ques}

\begin{ques}\label{ques4} If not, does there exist some recognising space for each stable subgroup of a relatively hyperbolic group?
\end{ques}

We note here that most of the examples of groups with universal recognising spaces given previously were either HHGs or CAT(0) groups, and therefore were finitely presented \cite{Behrstock2019,Bridson1999}. In contrast, it follows from \cite{OOS} that the counterexample we have built is not finitely presented. This raises further questions:

\begin{ques}\label{ques6} If $G$ is a finitely presented, acylindrically hyperbolic group with a largest acylindrical action on $X$, then is $X$ a universal recognising space for $G$?
\end{ques}

\begin{ques} \label{ques5}
    For a finitely presented relatively hyperbolic group whose peripheral subgroups are neither acylindrically hyperbolic nor virtually cyclic, is its relative Cayley graph a universal recognising space?
\end{ques}

The answers to Questions \ref{ques3} and \ref{ques4} may be hard, as the peripheral subgroups may have no natural action on a hyperbolic space, in contrast with the examples stemming from the HHG case. In particular, Remark \ref{rem:noacyl} shows that even if there is a positive answer to Question \ref{ques4}, in general it is not possible for all of the actions to be acylindrical. Regarding Question \ref{ques5}, to answer it (in the negative) by the same method as for Theorem \ref{thm:main}, we would need to find a finitely presented group which is neither acylindrically hyperbolic nor virtually cyclic, but has an infinite stable subgroup.\\

\noindent \textbf{Acknowledgments:} The authors would like to thank the 2023 Women in Groups, Geometry, and Dynamics program and its organizers for hosting them and providing the opportunity to work together. Our thanks also to Jacob Russell for suggesting helpful references, to Anthony Genevois for the comments contained in Remark \ref{rem:genevois} and to Harry Petyt for raising Question \ref{ques5}. The second author thanks Carolyn Abbott for conversations and collaborations that inspired the train of thought that led to this note.  We also thank the referee for a thorough reading of the paper and helpful comments and suggestions, in particular Question \ref{ques6}.

The third author is supported by EPSRC grant EP/V027360/1 ``Coarse geometry of groups and spaces.'' Through the fourth author, this work was supported by a grant from the Simons Foundation (965204, JM) and by National Science Foundation Grant No. DMS-1812021. The fifth author is supported by the Centre Henri Lebesgue ANR-11-LABX-0020-01 and the Région Bretagne's ARED program.

%%%%%%%%%%%%%%%%%%%%%%%%%% PRELIMS %%%%%%%%%%%%%%%%%%%%%%%%
%%%%%%%%%%%%%%%%%%%%%%%%%%%%%%%%%%%%%%%%%%%%%%%%%%%%%%%%%%%

\section{Preliminaries}\label{sec:prelim} 
For the sake of keeping our exposition as complete as possible, this section details some background information and results that will be used in the proof of Theorem \ref{thm:main}. Note that all metric spaces here are assumed to be geodesic.

%%%%%%%%%%%%%%%%%%% AH GROUPS %%%%%%%%%%%%%%%%%%%%%%

\subsection{Acylindrically hyperbolic groups.}\label{subsec:ahgrps}

Informally, an acylindrical action can be thought of as a generalization of a proper action, where points are allowed to have large stabilizers, but long cylinders are not. The notion of acylindricity goes back to Sela's paper \cite{Sela}, where it was considered for groups acting on trees. In the context of hyperbolic spaces (more generally, path-metric spaces), the definition is due to Bowditch \cite{Bowditch2008}.  An acylindrically hyperbolic group \cite{Osin} is a group which acts acylindrically on a hyperbolic space, in a way that is in some sense non-trivial. 

\begin{defn}
	An isometric action by a group $G$ on a metric space $(X,d)$ is \emph{acylindrical} if for every $R\geqslant 0$, there exist $N>0$ and $L>0$ such that for every $x,y\in X$ with $d(x,y)\geqslant L$, we have that
	\begin{equation*}
	   |\{g\in G: d(x,gx)\leqslant R\text{ and } d(y,gy)\leqslant R\}|\leqslant N.
    \end{equation*}
	An acylindrical action by a group $G$ on a hyperbolic space $X$ is \emph{non-elementary} if $G$ is not virtually cyclic and orbits are unbounded. A group is \emph{acylindrically hyperbolic} if it admits a non-elementary acylindrical action on a hyperbolic space.
\end{defn}

Over the last few years, the class of acylindrically hyperbolic groups has received considerable attention. It is broad enough to include many examples of interest, e.g., non-elementary hyperbolic and relatively hyperbolic groups (see Section \ref{subsec:relhyp}), all but finitely many mapping class groups of finite-type surfaces without boundary, non-directly-decomposable RAAGs, outer automorphism groups for free groups of rank at least 2, most $3$-manifold groups, and finitely presented groups of deficiency at least $2$. On the other hand, the existence of a non-elementary acylindrical action on a hyperbolic space is a rather strong assumption, which allows one to prove non-trivial results.  For examples of such results, see \cite{DGO,Osin, osinsurvey}, and references therein.

%%%%%%%%%%%%%%%%%% REL HYP GRPS %%%%%%%%%%%%%%%

\subsection{Relatively hyperbolic groups.}
\label{subsec:relhyp}

Roughly speaking, a group is relatively hyperbolic group if, when certain subgroups and their conjugates are ``coned-off,'' the resulting Cayley graph is hyperbolic. This is a more general class than that of hyperbolic groups, which are hyperbolic relative to their trivial subgroup. There are many equivalent definitions of relative hyperbolicity (see for example \cite{Bowditch,GM}). For the purposes of this note, we will use the following as our definition.

\begin{prop}\emph{\cite[Proposition 4.28]{DGO}}\label{prop:relhyp} Let $G$ be a group and $\{H_1,\ldots,H_m\}$ a collection of subgroups of $G$. Then $G$ is hyperbolic relative to $\{H_1,\ldots,H_m\}$ if and only if $\{H_1,\ldots,H_m\}$ is hyperbolically embedded in $G$ with respect to some finite $Y \subset G$.
\end{prop}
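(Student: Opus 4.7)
The statement is a citation from Dahmani--Guirardel--Osin, so the task is really to sketch what I would do if the theorem were not already in the literature. The plan is to exhibit both properties as a conjunction of (a) hyperbolicity of a specific coned-off Cayley graph and (b) a properness/finiteness statement about the way peripheral subgroups embed, and then match (a) with (a) and (b) with (b). Concretely, by definition a collection $\{H_1,\dots,H_m\}$ is hyperbolically embedded with respect to a finite $Y\subset G$ exactly when $\Gamma=\mathrm{Cay}(G,Y\sqcup\bigsqcup H_i)$ is hyperbolic and each $H_i$ carries a proper metric $\hat d_i$ measuring distance along words that avoid edges labelled by $H_i$. On the other hand, relative hyperbolicity in the sense most convenient for this argument (Osin's formulation in \cite{Osin}, or Farb's with the bounded coset penetration property) asks that $G$ be generated by the peripherals together with a finite set, that the resulting relative Cayley graph be hyperbolic, and that the BCP (or equivalently a linear relative isoperimetric inequality together with properness of the induced subgroup distortion) hold.

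In the forward direction I would start from Osin's definition. Take $Y$ to be the finite relative generating set for $G$ with respect to $\{H_i\}$; this is the natural candidate for the $Y$ in the hyperbolic embedding. The relative Cayley graph coincides (up to quasi-isometry, in fact as graphs when one adds complete subgraphs on the peripheral cosets) with $\Gamma=\mathrm{Cay}(G,Y\sqcup\bigsqcup H_i)$, so hyperbolicity of the relative Cayley graph is exactly the hyperbolicity half of hyperbolic embedding. The properness half of hyperbolic embedding demands that for every $R>0$ only finitely many elements of each $H_i$ are $\hat d_i$--close to the identity; this is where I would feed in BCP. The standard argument: a short $\hat d_i$--word from $1$ to $h\in H_i$ gives a path in $\mathrm{Cay}(G,Y)$ joining two vertices of the coset $H_i$ without entering $H_i$, and BCP forces such a backtrack to be bounded in the word metric on $H_i$, hence only finitely many $h$ are available.

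For the converse, assume the hyperbolic embedding with finite $Y$. The finiteness of $Y$ lets $Y\sqcup\bigsqcup H_i$ play the role of a relative generating set, and hyperbolicity of $\Gamma$ is the required hyperbolicity of the relative Cayley graph. To recover BCP, I would use the properness of $\hat d_i$: a geodesic bigon in $\Gamma$ that enters and exits a coset $gH_i$ at points far apart in $H_i$ would yield an $H_i$--element with large $\hat d_i$--distance to $1$ but small $\Gamma$--distance, contradicting properness. Thin triangles in $\Gamma$ then upgrade this bigon-level statement to the full BCP, giving relative hyperbolicity in Osin's sense, which is equivalent to Bowditch's via standard conversions.

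The step I expect to be the real obstacle is the properness/BCP equivalence, because it requires a careful analysis of how geodesics in $\Gamma$ traverse coset edges compared with paths in the bare Cayley graph $\mathrm{Cay}(G,Y)$. One has to control backtracking and phase transitions of geodesics into and out of peripheral cosets, using hyperbolicity of $\Gamma$ to promote a pointwise near-miss into a uniform bound. In particular, the direction ``properness $\Rightarrow$ BCP'' needs a quantitative fellow-traveller lemma for quasi-geodesics in $\Gamma$ that penetrate the same coset, which is the technical heart of \cite[\S 4]{DGO} and is the step I would expect to take the most care to reproduce.
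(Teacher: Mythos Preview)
The paper does not prove this proposition at all: it is quoted verbatim as \cite[Proposition 4.28]{DGO} and then \emph{adopted as the working definition} of relative hyperbolicity for the rest of the note (see the sentence ``For the purposes of this note, we will use the following as our definition'' immediately preceding it). So there is no proof in the paper to compare your sketch against.

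That said, your outline is a faithful summary of the strategy actually carried out in \cite[\S4]{DGO}: reduce both notions to hyperbolicity of the same coned-off Cayley graph plus a finiteness condition on the peripherals, and then show that properness of the relative metrics $\hat d_i$ is interchangeable with BCP / a linear relative isoperimetric inequality. Your identification of the hard step---turning properness of $\hat d_i$ into bounded coset penetration via a fellow-traveller argument for geodesics penetrating the same coset---is exactly where the work lies in DGO. One small caveat: in DGO the argument is routed through Osin's linear relative Dehn function formulation rather than Farb's BCP directly, so if you were to write this out you would likely find it cleaner to show ``properness of $\hat d_i$ $\Leftrightarrow$ linear relative isoperimetric inequality'' and then quote the known equivalence of Osin's and Farb's definitions, rather than attacking BCP head-on.
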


Being hyperbolically embedded in $G$ with respect to $Y$ means that $Y \cup H_1\cup\cdots\cup H_m$ is a generating set of $G$, that $\Ga(G,Y \sqcup H_1\sqcup\cdots\sqcup H_m)$ is hyperbolic, and additionally that this latter Cayley graph satisfies a technical condition we will not use here. We refer to \cite[Definition 2.1]{DGO} for the full description. Note that it also follows from a result of Sisto \cite[Theorem 2]{sistoHypEmbed} that hyperbolically embedded subgroups are quasi-convex, and thus stable when they are hyperbolic. 

\begin{defn}
    In the case of Proposition \ref{prop:relhyp}, $\{H_1,\ldots,H_m\}$ are referred to as \emph{peripheral subgroups}, and the Cayley graph $\Ga(G,Y \sqcup H_1\sqcup\cdots\sqcup H_m)$ is called the \emph{relative Cayley graph}.
\end{defn}

The following proposition is well-known (see for example \cite{GM} or \cite{OsinRelative}; the relative Dehn function is $0$ is this case) and will be used in the proof of Theorem \ref{thm:main}. The second statement follows from \cite[Corollary 1.5]{OsinRelative}.

\begin{prop}
\label{prop:freeproduct}
    If $H_1,\ldots,H_m$ are groups, then the free product $H_1\ast\cdots\ast H_m$ is hyperbolic relative to $\{H_1,\ldots,H_m\}$. More specifically, $H_1\ast\cdots\ast H_m$ is hyperbolic relative to $\{H_i: H_i\text{ is not hyperbolic}\}$.
\end{prop}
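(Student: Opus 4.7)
The plan is to handle the two statements separately, leveraging the characterization from Proposition \ref{prop:relhyp} and Osin's reduction theorem.

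For the first statement, I would apply Proposition \ref{prop:relhyp} and show that the collection $\{H_1,\ldots,H_m\}$ is hyperbolically embedded in $G=H_1\ast\cdots\ast H_m$ with respect to the empty finite set $Y=\emptyset$. The $H_i$ certainly generate $G$ as a set. The key geometric observation is that the relative Cayley graph $\Gamma(G,H_1\sqcup\cdots\sqcup H_m)$ admits a natural projection to the Bass--Serre tree $T$ of the free product splitting: each vertex-stabilizer coset $gH_i$ in $T$ corresponds to a ``clique'' in $\Gamma$ (a copy of $\Gamma(H_i,H_i)$, which is complete because every element of $H_i$ is a generator), and these cliques are attached in exactly the incidence pattern dictated by $T$. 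Contracting each clique to a point recovers $T$, so $\Gamma$ is quasi-isometric to a tree and in particular hyperbolic. This is a well-known picture, and the details essentially come from the uniqueness of the normal form for elements of a free product.

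The remaining task for the first statement is the technical condition: the associated metrics $\widehat{d}_i$ on each $H_i$, measuring lengths of paths in $\Gamma$ that avoid edges internal to $H_i$, must be proper. But the normal form for free products forces any such path from $h$ to $h'$ in $H_i$ to traverse at least one intermediate coset of a distinct $H_j$, and combinatorial bookkeeping on the tree structure shows that distinct elements of $H_i$ are separated by $\widehat{d}_i \geq 2$ (in fact only finitely many elements of $H_i$ can lie within any bounded $\widehat{d}_i$-ball, since each such path is witnessed by a finite amount of data along $T$). This yields hyperbolic embedding and hence the first statement.

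For the second (``more specific'') statement, I would invoke \cite[Corollary~1.5]{OsinRelative}, which states that if $G$ is hyperbolic relative to a collection containing a hyperbolic peripheral subgroup, then that subgroup may be removed from the list while preserving relative hyperbolicity with respect to the remaining peripherals. Starting from the decomposition produced above and iteratively applying this reduction to each hyperbolic $H_i$, one arrives at a decomposition in which only the non-hyperbolic $H_i$ remain, which is exactly the claim.

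The main obstacle is the verification of the technical condition on $\widehat{d}_i$ in the first statement; once that bookkeeping is done, both statements reduce to invoking results already in the literature.
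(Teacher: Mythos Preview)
Your proposal is correct, and for the second statement it coincides exactly with the paper's justification: both invoke \cite[Corollary~1.5]{OsinRelative} to drop hyperbolic peripherals one at a time.

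For the first statement, however, the paper gives no argument at all: it simply records the result as well-known, citing \cite{GM} and \cite{OsinRelative} and noting that the relative Dehn function vanishes. Your approach is genuinely different and more self-contained relative to the paper's setup, since you work directly with the DGO hyperbolic-embedding characterization chosen as the paper's definition (Proposition~\ref{prop:relhyp}), rather than appealing to Osin's relative Dehn function framework. The tree picture you describe is exactly the generalization of Example~\ref{ex:hypemb}, so it meshes well with the surrounding exposition. The trade-off is that the paper's citation is a one-liner, while your route requires verifying the technical properness condition on $\widehat d_i$.

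One small imprecision worth tightening: your sketch of the $\widehat d_i$ verification undersells the conclusion. In the free product with $Y=\emptyset$, the Bass--Serre tree structure forces any path leaving the identity coset of $H_i$ to re-enter through the \emph{same} vertex it left from. Hence for distinct $h,h'\in H_i$ there is simply no admissible path avoiding the $H_i$-edges, so $\widehat d_i(h,h')=\infty$, not merely $\geq 2$. This makes properness immediate, and your parenthetical about ``finite amount of data along $T$'' is unnecessary.
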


\begin{ex}
    \label{ex:hypemb}
    \cite[Example 2.2]{DGO}
    Let $G = H \ast \mathbb{Z}$ and $\mathbb{Z} = \langle y \rangle$. Set $Y = \{y\}$. Then $H$ is hyperbolically embedded in $G$ with respect to $Y$. In particular, $\Ga(G, Y \sqcup H)$ is quasi-isometric to an infinite-diameter tree (see Fig.\ref{fig:1b}), whose ``vertices" are copies of the complete Cayley graph $\Ga_H = \Ga(H, H)$.
\end{ex}

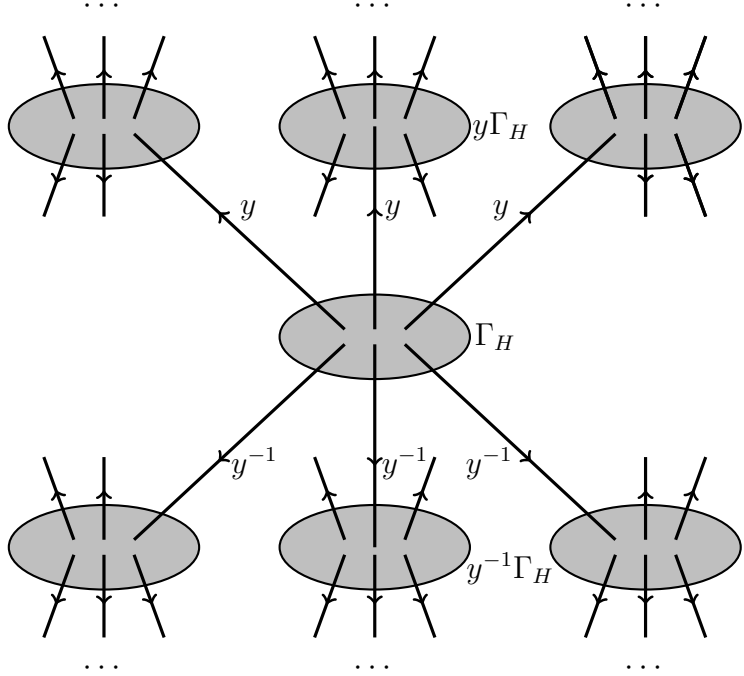
\begin{figure}[ht]
\centering
\begin{tikzpicture}[scale=0.4]

\draw[thick, fill=gray!50] (0, 0) ellipse (90pt and 40pt);
\draw[thick,fill=gray!50] (0, 7) ellipse (90pt and 40pt);
\draw[thick,fill=gray!50] (0, -7) ellipse (90pt and 40pt);
\draw[thick,fill=gray!50] (9, 7) ellipse (90pt and 40pt);
\draw[thick,fill=gray!50] (-9, 7) ellipse (90pt and 40pt);
\draw[thick,fill=gray!50] (-9, -7) ellipse (90pt and 40pt);
\draw[thick,fill=gray!50] (9, -7) ellipse (90pt and 40pt);

\begin{scope}[very thick,decoration={
    markings,
    mark=at position 0.6 with {\arrow{>}}}
    ] 
    
\draw[postaction={decorate}] (0,0.25) -- (0,7);
\draw[postaction={decorate}] (0, -0.25) -- (0,-7);
\draw[postaction={decorate}] (-1,0.25) -- (-8,6.75);
\draw[postaction={decorate}] (1,0.25) -- (8,6.75);
\draw[postaction={decorate}] (-1,-0.25) -- (-8,-6.75);
\draw[postaction={decorate}] (1,-0.25) -- (8,-6.75);

\draw[postaction={decorate}] (-1,7.25) -- (-2,10);
\draw[postaction={decorate}] (0, 7.25) -- (0, 10);
\draw[postaction={decorate}] (1, 7.25) -- (2,10);
\draw[postaction={decorate}] (-1, 6.75) -- (-2, 4);
\draw[postaction={decorate}] (1, 6.75) -- (2,4);

\draw[postaction={decorate}] (8, 7.25) -- (7, 10);
\draw[postaction={decorate}] (9, 7.25) -- (9, 10);
\draw[postaction={decorate}] (10, 7.25) -- (11, 10);
\draw[postaction={decorate}] (9, 6.75) -- (9,4);
\draw[postaction={decorate}] (10, 6.75) -- (11,4);

\draw[postaction={decorate}] (8, 7.25) -- (7, 10);
\draw[postaction={decorate}] (9, 7.25) -- (9, 10);
\draw[postaction={decorate}] (10, 7.25) -- (11, 10);
\draw[postaction={decorate}] (9, 6.75) -- (9,4);
\draw[postaction={decorate}] (10, 6.75) -- (11,4);

\draw[postaction={decorate}] (-8, 7.25) -- (-7, 10);
\draw[postaction={decorate}] (-9, 7.25) -- (-9, 10);
\draw[postaction={decorate}] (-10, 7.25) -- (-11, 10);
\draw[postaction={decorate}] (-10, 6.75) -- (-11,4);
\draw[postaction={decorate}] (-9, 6.75) -- (-9, 4);

\draw[postaction={decorate}] (-8, -7.25) -- (-7, -10);
\draw[postaction={decorate}] (-9, -7.25) -- (-9, -10);
\draw[postaction={decorate}] (-10, -7.25) -- (-11, -10);
\draw[postaction={decorate}] (-9, -6.75) -- (-9, -4);
\draw[postaction={decorate}] (-10, -6.75) -- (-11, -4);

\draw[postaction={decorate}] (-1, -7.25) -- (-2, -10);
\draw[postaction={decorate}] (0, -7.25) -- (0, -10);
\draw[postaction={decorate}] (1, -7.25) -- (2, -10);
\draw[postaction={decorate}] (-1, -6.75) -- (-2, -4);
\draw[postaction={decorate}] (1, -6.75) -- (2, -4);

\draw[postaction={decorate}] (8, -7.25) -- (7, -10);
\draw[postaction={decorate}] (9, -7.25) -- (9, -10);
\draw[postaction={decorate}] (10, -7.25) -- (11, -10);
\draw[postaction={decorate}] (10, -6.75) -- (11, -4);
\draw[postaction={decorate}] (9, -6.75) -- (9, -4);
\end{scope}

\node[black] (c) at (4,0) {$\Gamma_H$};
\node[black] (u) at (4.2,7) {$y \Gamma_H$};
\node[black] (d) at (4.5,-7.7) {$y^{-1}\Gamma_H$};
\node[black] (ar) at (0.6,4.2) {$y$};
\node[black] (ar2) at (1, -4.2) {$y^{-1}$};
\node[black] (ar3) at (-4.2, 4.2) {$y$};
\node[black] (ar4) at (4.2, 4.2) {$y$};
\node[black] (ar5) at (-4, -4.2) {$y^{-1}$};
\node[black] (ar6) at (3.8, -4.2) {$y^{-1}$};
\node[black] (misc) at (0, 11) {$\cdots$};
\node[black] (misc) at (0, -11) {$\cdots$};
\node[black] (misc) at (-9, 11) {$\cdots$};
\node[black] (misc) at (9, 11) {$\cdots$};
\node[black] (misc) at (-9, -11) {$\cdots$};
\node[black] (misc) at (9, -11) {$\cdots$};

\end{tikzpicture}
\caption{$H * \mathbb{Z}$}
\label{fig:1b}
\end{figure}

Relatively hyperbolic groups fit nicely in between the classes of hyperbolic and acylindrically hyperbolic groups, as shown by the following proposition.

\begin{prop}
    \emph{\cite[Proposition 5.2]{Osin}}
    \label{prop:relhypacyl}
    Let $G$ be a group that is hyperbolic relative to a collection of subgroups $\{H_1,\ldots,H_m\}$, and let $Y\subset G$ be a finite set such that $\{H_1,\ldots,H_m\}$ is hyperbolically embedded in $G$ with respect to $Y$. Then $G$ acts acylindrically on the relative Cayley graph $\Ga(G,Y \sqcup H_1\sqcup\cdots\sqcup H_m)$.
\end{prop}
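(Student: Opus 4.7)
The plan is to exploit the defining properties of a hyperbolically embedded collection, in particular the technical condition omitted in the excerpt: it equips each peripheral subgroup $H_i$ with a \emph{relative length function} $\hat{d}_i\colon H_i\times H_i\to [0,\infty]$ whose finite-radius balls are finite. Here $\hat{d}_i(h,h')$ is the infimum of lengths of paths in $X=\Ga(G,Y\sqcup H_1\sqcup\cdots\sqcup H_m)$ joining $h$ to $h'$ while avoiding the edges labelled by $H_i\setminus\{1\}$. This local finiteness is the mechanism that prevents long cylinders, so the acylindricity statement is really a structural consequence of it, combined with the hyperbolicity of $X$.

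Given $R\geqslant 0$ and denoting by $\delta$ the hyperbolicity constant of $X$, I first want to show a geometric reduction: for $L=L(R,\delta)$ sufficiently large, if $d_X(x,y)\geqslant L$ and $g\in G$ satisfies $d_X(x,gx)\leqslant R$ and $d_X(y,gy)\leqslant R$, then on any geodesic $\gamma$ from $x$ to $y$ one can find a subarc $\alpha$ far from both endpoints such that $g\alpha$ fellow-travels $\gamma$ within distance $C=C(R,\delta)$. This follows from the standard thin-quadrilateral argument applied to the $4$-gon $[x,gx]\cup [gx,gy]\cup[gy,y]\cup[y,x]$. After translating so that an endpoint of $\alpha$ is a chosen vertex, we may assume $g$ approximately shifts $\alpha$ along $\gamma$ by a uniformly bounded amount.

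The heart of the proof is then to bound, uniformly in $(x,y)$, the number of possible $g$'s. I would argue by syllables: a geodesic in $X$ decomposes into edges labelled by elements of $Y$ and by elements of some $H_i$. Choosing $L$ much larger than $R$ and $\delta$, the subarc $\alpha$ must traverse either many short syllables or at least one syllable corresponding to a coset $fH_i$ whose $H_i$-edge is long. In both cases, the approximate shift of $\gamma$ by $g$ determines, up to finitely many choices from $Y$ and from bounded $\hat{d}_i$-balls, the element $g$ itself, because the image of the $fH_i$-coset under $g$ must either coincide with $fH_i$ (forcing $g\in fH_if^{-1}$ with small $\hat{d}_i$-translation) or land on a nearby coset, of which only finitely many lie within a bounded neighborhood of $\gamma$. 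Local finiteness of the $\hat{d}_i$'s then yields the desired uniform bound $N=N(R)$.

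The main obstacle is the syllable-counting step: one must carefully pass from the coarse geometric statement that $g$ moves a long subarc a bounded amount along $\gamma$ to the combinatorial conclusion that $g$ lies in one of finitely many double cosets of the $H_i$'s with bounded $\hat{d}_i$-displacement. This is essentially the content of the bounded coset penetration property rewritten in the hyperbolically embedded language, and it is where the technical condition in \cite[Definition 2.1]{DGO} does its real work.
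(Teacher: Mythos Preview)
The paper does not prove this proposition at all: it is quoted verbatim as \cite[Proposition 5.2]{Osin} and used as a black box, so there is no ``paper's own proof'' to compare against. Your sketch is therefore not competing with anything in the present paper.

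That said, your outline is broadly in the spirit of Osin's actual argument: the relative metrics $\hat{d}_i$ and their local finiteness are indeed the mechanism behind acylindricity, and the thin-quadrilateral reduction is the right first move. Where your sketch diverges from the standard proof is in the ``syllable-counting'' step. Osin (building on \cite{DGO}) does not argue via long versus short syllables or via ``nearby cosets''; instead he uses the notion of \emph{components} (maximal subpaths of a path lying in a single $H_i$-coset) together with a lemma bounding the $\hat{d}_i$-length of isolated components in a geodesic polygon. The fellow-travelling of $[x,y]$ and $g[x,y]$ forces their components to be connected (non-isolated), and then the local finiteness of $\hat{d}_i$-balls pins down $g$ up to finitely many choices. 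Your version --- deciding whether $g(fH_i)$ equals $fH_i$ or lands on ``a nearby coset, of which only finitely many lie within a bounded neighborhood of $\gamma$'' --- is not obviously correct as stated: in the relative Cayley graph every coset has diameter~$1$, so many cosets can lie near $\gamma$, and it is precisely the component-isolation machinery that makes this finite. If you want a self-contained proof, I would recommend rewriting that step using components and the polygon lemma from \cite[Proposition~4.13]{DGO} rather than an ad hoc syllable count.
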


In particular, this means that, so long as $\Ga(G,Y \sqcup H_1\sqcup\cdots\sqcup H_m)$ has infinite diameter and $G$ is not virtually cyclic, $G$ is an acylindrically hyperbolic group. Such relatively hyperbolic groups are referred to as \emph{non-elementary}.

%%%%%%%%%%%%%%%% LARGEST ACYL ACTIONS %%%%%%%%%%%%%%

\subsection{Largest acylindrical actions}\label{subsec:largest}

For any group $G$, one can study the set of cobounded acylindrical actions of $G$ on hyperbolic spaces, and ask if this set has a \emph{largest action.} It follows from \cite[Lemma 3.11]{ABO} that this is equivalent to considering the (possibly infinite) generating sets $Y$ of $G$ such that the Cayley graph $\Ga(G,Y)$ is hyperbolic and $G \acts \Ga(G,Y)$ is acylindrical:

\begin{defn}
    Let $G$ be a group, and $Y\subset G$ be a generating set. The action $G \acts \Ga(G, Y)$ is the \emph{largest acylindrical action} if $\Ga(G, Y)$ is hyperbolic and $G \acts \Ga(G, Y)$ is acylindrical, and if for every other generating set $S\subset G$ such that $\Ga(G, S)$ is hyperbolic and $G \acts \Ga(G, S)$ is acylindrical, the identity map of $G$ induces a Lipschitz map $\Ga(G,Y)\to\Ga(G,S)$. 
\end{defn}

Questions about largest actions were considered extensively in \cite{ABO}, and we refer the reader to the  introduction and initial sections of that paper for more precise definitions.

\begin{rem}
    By \cite[Theorem 2.6]{ABO}, if a group $G$ is not acylindrically hyperbolic, then its largest acylindrical action is realised either by taking $G$ itself to be the generating set, or, in the case that $G$ is virtually cyclic, by any finite generating set of $G$. This means that the existence of a largest acylindrical action is non-trivial only if the group is acylindrically hyperbolic.
\end{rem}

Even though there are examples of acylindrically hyperbolic groups that do not admit a largest acylindrical action, (see \cite[Example 7.2 and Theorem 7.3]{ABO}), it turns out that many groups classically studied in geometric group theory do:

\begin{thm}\emph{\cite[Theorem 2.18]{ABO}}\label{thm:ahacc} \emph{\cite[Theorem A]{MR4215647}} The following groups have largest acylindrical actions:
\begin{enumerate}[noitemsep,nolistsep]
\item[(a)] Hyperbolic groups.
\item[(b)] Finitely generated relatively hyperbolic groups whose peripheral subgroups are not acylindrically hyperbolic.
\item[(c)] Hierarchically hyperbolic groups, which includes mapping class groups of orientable surfaces of finite type and right-angled Artin groups.
\item[(d)] Fundamental groups of compact orientable $3$-manifolds with empty or toroidal boundary.
\end{enumerate}
\end{thm}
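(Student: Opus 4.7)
I would handle each case separately, since the techniques differ across them.

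For (a), a hyperbolic group $G$ with finite generating set $S$ acts properly cocompactly, hence acylindrically, on its hyperbolic Cayley graph $\Ga(G,S)$. To establish largeness, take any other generating set $T$ with $\Ga(G,T)$ hyperbolic and $G$ acting acylindrically; the finitely many elements of $S$ each have bounded $T$-displacement, so the identity $\Ga(G,S) \to \Ga(G,T)$ is Lipschitz. For (b), acylindricity on the relative Cayley graph is Proposition \ref{prop:relhypacyl}. To show largeness, let $\Ga(G,S)$ be a competing hyperbolic Cayley graph with acylindrical $G$-action. The key step is to argue that each peripheral $H_i$ has bounded orbits on $\Ga(G,S)$: the restricted action is acylindrical, $H_i$ is not acylindrically hyperbolic so it cannot act non-elementarily, and the relatively hyperbolic structure together with coboundedness controls the remaining elementary possibilities. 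Combined with the finite set $Y$ (whose elements automatically have bounded displacement), the whole relative generating set has uniformly bounded displacement on $\Ga(G,S)$, giving the Lipschitz identity map from the relative Cayley graph.

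For (c), I would invoke the hierarchical structure from \cite{MR4215647}: the top-level curve graph of the HHG carries an acylindrical action, and the distance formula together with the consistency axioms imply that any competing acylindrical action is dominated by the top-level projections. For (d), reduce to (b) via JSJ decomposition: a compact orientable 3-manifold group is either hyperbolic (case (a)), virtually cyclic, or relatively hyperbolic with peripheral subgroups being fundamental groups of Seifert-fibered or graph-manifold pieces, which are not acylindrically hyperbolic due to their central $\mathbb{Z}$ subgroups arising from circle fibers.

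The main obstacle is part (c): hierarchical hyperbolicity is substantially more intricate than either hyperbolicity or relative hyperbolicity, and verifying maximality requires careful use of the hierarchy machinery to show that orbits in any competing acylindrical action are controlled by top-level projections --- a qualitatively different argument from the Cayley-graph displacement bounds available in (a) and (b). Part (b) is also subtle in how one rules out unbounded elementary orbits of a peripheral, and this is where the precise interplay of relative hyperbolicity with the assumption that peripherals are not themselves acylindrically hyperbolic does the heavy lifting.
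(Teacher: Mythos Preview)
The paper does not prove this theorem; it is stated as a preliminary result with citations to \cite[Theorem 2.18]{ABO} and \cite[Theorem A]{MR4215647}, and no proof is given in the paper itself. So there is no ``paper's own proof'' to compare your proposal against.

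That said, your sketch is broadly reasonable as an outline of how the cited references proceed, with one genuine gap worth flagging. In part (b), your argument that each peripheral $H_i$ has bounded orbits in any competing acylindrical action does not go through as stated: a virtually cyclic $H_i$ is not acylindrically hyperbolic, yet it can perfectly well act loxodromically (hence unboundedly) on a hyperbolic space via an acylindrical action. Your phrase ``the relatively hyperbolic structure together with coboundedness controls the remaining elementary possibilities'' is exactly where the content lies, and it is not a one-line observation. In \cite{ABO} this is handled by adjusting the peripheral structure so that virtually cyclic peripherals are absorbed; this is precisely why the paper's subsequent Theorem~\ref{thm:largestrelhyp} adds the extra hypothesis that peripherals are not virtually cyclic when identifying the relative Cayley graph as the space carrying the largest action. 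Your treatment of (d) is also somewhat rough: not every compact orientable $3$-manifold group falls cleanly into ``hyperbolic, virtually cyclic, or relatively hyperbolic with Seifert/graph-manifold peripherals''; one must also account for the cases where the whole group fails to be acylindrically hyperbolic (e.g., Seifert fibered over a surface of positive genus), where the largest acylindrical action is the trivial one.
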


In all of these cases, the space on which the group has its largest action is well understood.  As mentioned in the introduction, these spaces are universal recognizing spaces for (a) and (c).  For the relatively hyperbolic groups in (b), the largest acylindrical action is on the relative Cayley graph, as long as no peripheral subgroups are virtually cyclic:

\begin{thm}
    \emph{\cite[proofs of Theorem 2.18(a) and Theorem 7.9]{ABO}}
    \label{thm:largestrelhyp}
    Let $G$ be a finitely generated relatively hyperbolic groups whose peripheral subgroups $\{H_1,\ldots,H_m\}$ are neither acylindrically hyperbolic nor virtually cyclic. Let $Y\subset G$ be a finite set such that $\{H_1,\ldots,H_m\}$ is hyperbolically embedded in $G$ with respect to $Y$. The largest acylindrical action of $G$ is on the relative Cayley graph $\Ga(G,Y \sqcup H_1\sqcup\cdots\sqcup H_m)$.
\end{thm}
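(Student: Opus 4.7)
The plan is to verify the two defining conditions for a largest acylindrical action: first, that $G$ acts acylindrically on the hyperbolic space $\Ga(G, Y \sqcup H_1 \sqcup \cdots \sqcup H_m)$, and second, that this action dominates (via the identity map on $G$) any other acylindrical action on a hyperbolic Cayley graph of $G$. The first condition is essentially given to us: hyperbolicity of $\Ga(G, Y \sqcup H_1 \sqcup \cdots \sqcup H_m)$ is part of the hypothesis that $\{H_1,\ldots,H_m\}$ is hyperbolically embedded with respect to $Y$, and acylindricity of the $G$--action comes from Proposition \ref{prop:relhypacyl}.

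For the domination condition, I would fix any generating set $S \subset G$ with $\Ga(G,S)$ hyperbolic and $G \acts \Ga(G,S)$ acylindrical, and aim to show that the identity map of $G$ induces a Lipschitz map $\Ga(G, Y \sqcup H_1 \sqcup \cdots \sqcup H_m) \to \Ga(G,S)$. Since both are Cayley graphs, this reduces to uniformly bounding the $S$--word length of every element in the generating set $Y \sqcup H_1 \sqcup \cdots \sqcup H_m$. Because $Y$ is finite, its elements have uniformly bounded $S$--length automatically, so the whole task reduces to showing that for each $i$, the orbit $H_i \cdot 1$ is bounded in $\Ga(G,S)$.

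The central step is to apply Osin's trichotomy for acylindrical actions on hyperbolic spaces. Restricting the action $G \acts \Ga(G,S)$ to the subgroup $H_i$ yields an acylindrical action of $H_i$ on the hyperbolic space $\Ga(G,S)$. Osin's classification then gives three alternatives: either $H_i$ has bounded orbits, or $H_i$ is virtually cyclic and contains a loxodromic element, or $H_i$ contains two independent loxodromic elements and is therefore acylindrically hyperbolic. The standing hypotheses on the peripheral subgroups rule out the last two cases, so we conclude that $H_i \cdot 1$ has bounded diameter in $\Ga(G,S)$, as desired.

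I do not anticipate serious obstacles: the argument is essentially a packaging of definitions around Osin's trichotomy. The only point requiring mild care is to confirm that restriction of an acylindrical action to a subgroup is acylindrical (immediate from the definition, since the same constants $N,L$ work), and to make sure the Lipschitz condition in the definition of ``largest acylindrical action'' really does follow from bounded orbits of each generator, which it does because the $S$--distance from $1$ to any $h \in H_i$ equals the $S$--length of the single edge in $\Ga(G, Y \sqcup H_1 \sqcup \cdots \sqcup H_m)$ labelled by $h$.
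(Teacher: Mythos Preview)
The paper does not give its own proof of this theorem; it is stated with a citation to \cite[proofs of Theorem 2.18(a) and Theorem 7.9]{ABO} and used as a black box. Your argument is correct and is essentially the one carried out in \cite{ABO}: hyperbolicity and acylindricity of the action on the relative Cayley graph are immediate, and domination over any other acylindrical hyperbolic action follows by restricting to each peripheral $H_i$ and invoking Osin's trichotomy \cite[Theorem~1.1]{Osin} to force bounded orbits, since the hypotheses exclude the virtually cyclic and non-elementary cases.
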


%%%%%%%%%%%%%%% STABLE SUBGRPS %%%%%%%%%%%%%%%%%%%%%%

\subsection{Stable subgroups of relatively hyperbolic groups.}\label{subsec:stablesubgrps}

Stable subgroups of an arbitrary finitely generated group are defined as follows.

\begin{defn}\label{def:stable} Let $G$ be a finitely generated group. A finitely generated subgroup $K \leq G$ is said to be \emph{stable} if these conditions both hold:\begin{enumerate}
    \item $K$ is undistorted in $G$, meaning $K$ quasi-isometrically embeds into $G$ with respect to their respective finite generating sets. 
    \item For any constants $(\lambda,C)$, there is a constant $M >0$ (depending on $\lambda, C$) such that any two $(\lambda,C)$--quasi-geodesics with the same end points in $K$ are $M$--close to each other.
\end{enumerate} 
\end{defn}

When the ambient group is relatively hyperbolic, stable subgroups can also be characterised in terms of how they intersect conjugates of peripheral subgroups.

\begin{thm}\emph{\cite[Corollary 1.10]{Tran}} \label{thm:transferstable} Let $G$ be a finitely generated relatively hyperbolic groups with peripheral subgroups $\{H_1,\ldots,H_m\}$. Let $K$ be a finitely generated undistorted subgroup of $G$. Then the following are
equivalent: \begin{enumerate}
    \item $K$ is stable in $G$
    \item For every peripheral subgroup $H_i$, $K \cap H_i^g$ is stable in $H_i^g$ for each conjugate $H_i^g$
    \item For every peripheral subgroup $H_i$, $K \cap H_i^g$ is stable in $G$ for each conjugate $H_i^g$
\end{enumerate}
\end{thm}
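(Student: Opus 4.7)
The plan is to establish (2) $\Leftrightarrow$ (3), (1) $\Rightarrow$ (2), and (2) $\Rightarrow$ (1), using three standard ingredients: (i) each peripheral conjugate $H_i^g$ is undistorted and strongly quasiconvex in $G$, since peripherals are hyperbolically embedded by Proposition \ref{prop:relhyp}; (ii) stability is equivalent to ``undistorted plus Morse''; and (iii) the Drutu--Sapir/Osin description of $(\lambda,C)$--quasi-geodesics in a relatively hyperbolic group as concatenations of subpaths which are either quasi-geodesic inside peripheral cosets or which project to quasi-geodesics without backtracking in the relative Cayley graph $\Ga(G,Y\sqcup H_1\sqcup\cdots\sqcup H_m)$.

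For (2) $\Leftrightarrow$ (3): because $H_i^g$ is undistorted in $G$, its intrinsic word metric is quasi-isometric to the metric it inherits from $G$, so both the quasi-geodesic class and the Morse fellow-traveling constants transfer in either direction, with constants depending only on the undistortion data. Stability of $K\cap H_i^g$ in $H_i^g$ is therefore equivalent to its stability in $G$.

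For (1) $\Rightarrow$ (2): given two $(\lambda,C)$--quasi-geodesics in $K\cap H_i^g$, view them as quasi-geodesics in $K$ with common endpoints in $H_i^g$. Stability of $K$ in $G$ produces uniform fellow-travelers in $G$, and strong quasiconvexity of $H_i^g$ lets us replace these by paths in a uniform neighborhood of $H_i^g$, giving the Morse condition intrinsic to $H_i^g$. Undistortion of $K\cap H_i^g$ in $H_i^g$ then follows from combining undistortion of $K$ and of $H_i^g$ in $G$ with the bounded coset penetration property.

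The main obstacle is (2) $\Rightarrow$ (1). Assume $K$ is undistorted in $G$ and each $K\cap H_i^g$ is stable in $H_i^g$. For a pair of $(\lambda,C)$--quasi-geodesics $\alpha,\beta$ in $G$ with common endpoints in $K$, project them to the relative Cayley graph $\hat\Ga$; since $K$ is undistorted in $G$ and the peripherals are coned off, the projections are uniform quasi-geodesics in the hyperbolic space $\hat\Ga$ and therefore fellow-travel. Pulling this back to $G$ via the Drutu--Sapir characterization shows that $\alpha$ and $\beta$ are uniformly close outside any deep peripheral excursion, while inside a shared excursion through a coset $gH_ig^{-1}$ the relevant subpaths are quasi-geodesics in the peripheral whose endpoints lie near $K\cap gH_ig^{-1}$; hypothesis (2) then bounds their fellow-travel distance within that coset. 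The delicate technical point, which I expect to be the main obstacle, is proving uniformly over all peripheral excursions that the entry and exit points of $\alpha$ and $\beta$ actually lie within bounded distance of a single translate of $K\cap H_i^g$, so that the intrinsic stability inside the peripheral may be applied; this requires fusing undistortion of $K$, strong quasiconvexity of each $H_i^g$, and the bounded coset penetration property into a single uniform estimate.
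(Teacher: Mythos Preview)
The paper does not prove this theorem; it is quoted verbatim as \cite[Corollary 1.10]{Tran} and used as a black box, so there is no proof in the paper to compare your proposal against.

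As an independent sketch, your outline follows a plausible route, but a few steps are looser than you indicate. For (2) $\Leftrightarrow$ (3), undistortion of $H_i^g$ alone is not enough: quasi-geodesics in $G$ with endpoints in $K\cap H_i^g$ need not stay near $H_i^g$, so to pass from stability in $H_i^g$ to stability in $G$ you must invoke that $H_i^g$ is strongly quasiconvex (Morse) in $G$ and then use a transitivity result for strong quasiconvexity, not merely the transfer of metrics. For (1) $\Rightarrow$ (2), the assertion that $K\cap H_i^g$ is undistorted does not follow from undistortion of $K$ and of $H_i^g$ separately; one needs the fact that intersections of strongly quasiconvex subgroups are strongly quasiconvex (hence undistorted), which is itself one of Tran's results. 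For (2) $\Rightarrow$ (1), the ``delicate technical point'' you flag is genuinely the crux, and your plan does not explain why the entry and exit points of a deep excursion of $\alpha$ or $\beta$ should lie near $K\cap H_i^g$ rather than merely near $K$; in general an excursion of a $G$--quasi-geodesic through a peripheral coset has no reason to track $K$ inside that coset. Tran's actual argument proceeds instead through his general machinery relating strong quasiconvexity to lower relative divergence, which sidesteps this coset-by-coset analysis.
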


We would like to use this to construct an example of a relatively hyperbolic group with a stable subgroup not recognised by the relative Cayley graph. In this context, one useful property of peripheral subgroups is the following.

\begin{lem}
    \emph{\cite{Bowditch}}
    Let $G$ be hyperbolic relative to $\{H_1,\ldots,H_m\}$. Then for every $g,h\in G$ and $i,j\in\{1,\ldots,m\}$, if the conjugates $H_i^g$ and $H_j^h$ are not equal, then their intersection is finite.
\end{lem}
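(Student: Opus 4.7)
The plan is to prove the lemma by invoking Bowditch's characterization of relative hyperbolicity in terms of an action on a fine hyperbolic graph, which is equivalent to the definition of relative hyperbolicity used in this paper by the equivalences collected in \cite{DGO}. Specifically, since $G$ is hyperbolic relative to $\{H_1,\ldots,H_m\}$, there is a connected fine hyperbolic graph $K$ on which $G$ acts with finite edge stabilizers, finitely many orbits of edges, and representative vertices $v_1,\ldots,v_m\in K$ of infinite valence satisfying $\operatorname{Stab}(v_i)=H_i$. Recall that fineness means: for each edge $e$ and each integer $n$, only finitely many embedded circuits of length at most $n$ pass through $e$. A standard consequence is that between any two vertices of $K$ there are only finitely many geodesics, because any two distinct geodesics between the same pair of endpoints concatenate to a closed walk that contains an embedded sub-circuit through any chosen edge.

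First I would observe that for any $g\in G$, $\operatorname{Stab}(gv_i)=gH_ig^{-1}=H_i^g$. Taking the contrapositive of the trivial implication ``$gv_i=hv_j \Rightarrow \operatorname{Stab}(gv_i)=\operatorname{Stab}(hv_j)$'', the hypothesis $H_i^g\neq H_j^h$ forces the two vertices $u:=gv_i$ and $w:=hv_j$ to be distinct.

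Next, set $L:=H_i^g\cap H_j^h$. Then $L$ fixes both $u$ and $w$, hence acts on the nonempty finite set $\Gamma_{u,w}$ of geodesics from $u$ to $w$. The kernel $L_0$ of this action is a finite-index subgroup of $L$, and if $\gamma\in\Gamma_{u,w}$ has edges $e_1,\ldots,e_n$, then $L_0$ stabilizes each $e_k$ setwise, and a further finite-index subgroup fixes each $e_k$. Since each $\operatorname{Stab}(e_k)$ is finite, this final subgroup is finite, and hence so is $L$.

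The main obstacle I anticipate is mostly bookkeeping: invoking Bowditch's characterization correctly, and confirming that in a fine graph there are only finitely many geodesics between any two vertices. A direct proof using only the hyperbolically-embedded definition via the relative metric $\widehat{d}_i$ on $H_i$ is possible (exploiting properness of $\widehat{d}_i$ together with a loop argument for elements of $H_i\cap gH_jg^{-1}$), but it requires separate case analysis when $i=j$ versus $i\neq j$, and is less geometrically transparent than the fine graph approach.
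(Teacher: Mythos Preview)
The paper does not provide its own proof of this lemma; it simply records the statement with a citation to Bowditch. Your proposal is therefore not a comparison target but a genuine proof, and it is correct: it is essentially the standard argument behind the cited result, carried out in Bowditch's fine hyperbolic graph model (whose equivalence with the hyperbolically-embedded definition used in the paper you correctly invoke via \cite{DGO}).

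Two minor comments on the write-up. First, your justification that a fine graph has only finitely many geodesics between any two vertices is only a sketch; the assertion that two distinct geodesics concatenate to a closed walk containing an embedded circuit through ``any chosen edge'' does not by itself bound the number of geodesics. The clean statement is that fineness is equivalent to the condition that for every pair of vertices and every $n$ there are only finitely many arcs of length at most $n$ between them; this is recorded in Bowditch's paper and you may simply cite it. Second, once $L_0$ fixes a geodesic $\gamma$ from $u$ to $w$ and already fixes the endpoints $u$ and $w$, it must fix every vertex of $\gamma$ (each vertex is determined by its distance from $u$ along $\gamma$), so $L_0$ itself already lies in each edge stabilizer; the passage to a ``further finite-index subgroup'' is unnecessary.
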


As any finite subgroup is stable, which can be seen directly from Definition \ref{def:stable}, we observe the following corollary to Theorem \ref{thm:transferstable}.

\begin{cor}
\label{cor:transferstable}
    Let $G$ be a finitely generated relatively hyperbolic groups with peripheral subgroups $\{H_1,\ldots,H_m\}$. Let $K$ be a finitely generated undistorted subgroup of $G$. Suppose that $K$ is a subgroup of some $H_i$, and that $K$ is stable in $H_i$. Then $K$ is stable in $G$.
\end{cor}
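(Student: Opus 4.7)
The plan is to verify condition (2) of Theorem \ref{thm:transferstable}, since the undistortedness hypothesis is already assumed. That is, for each peripheral subgroup $H_j$ and each conjugate $H_j^g$, we need to show $K \cap H_j^g$ is stable in $H_j^g$.

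First I would split into two cases based on how $H_j^g$ relates to $H_i$. In the easy case $H_j^g = H_i$ (which forces $j = i$), we have $K \cap H_j^g = K \cap H_i = K$, since $K \leq H_i$ by hypothesis. This is stable in $H_i = H_j^g$ by the given assumption, so there is nothing to prove.

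In the remaining case $H_j^g \neq H_i$, I would invoke the almost malnormality of peripheral subgroups in relatively hyperbolic groups (the lemma of Bowditch stated just before the corollary): the intersection $H_i \cap H_j^g$ is finite. Since $K \leq H_i$, this gives
\[
K \cap H_j^g \;\subseteq\; H_i \cap H_j^g,
\]
which is therefore finite. Any finite subgroup is stable directly from Definition \ref{def:stable} (quasi-geodesics with endpoints in a bounded set uniformly fellow-travel, and finite groups trivially quasi-isometrically embed), so $K \cap H_j^g$ is stable in $H_j^g$.

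Having verified condition (2) for every peripheral $H_j$ and every conjugate, Theorem \ref{thm:transferstable} then yields that $K$ is stable in $G$. There is no significant obstacle here: the argument is essentially a bookkeeping step once the Bowditch finite-intersection property is in hand, and the stability of finite subgroups is immediate.
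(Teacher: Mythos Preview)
Your proof is correct and follows exactly the approach the paper intends: the paper does not spell out a separate proof, but the sentence preceding the corollary (``As any finite subgroup is stable\ldots we observe the following corollary to Theorem \ref{thm:transferstable}'') is precisely your argument, namely verifying condition (2) of Theorem \ref{thm:transferstable} using Bowditch's finite-intersection lemma together with the stability of finite subgroups. One minor remark: the parenthetical ``which forces $j=i$'' is not needed and may not hold under every convention for peripheral structures, but since you never use it, the argument is unaffected.
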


%%%%%%%%%%%%%%%%% PF OF MAIN THM %%%%%%%%%%%%%%%%%%%%%

\section{Proof of Main Theorem}\label{sec:mainpf}

\begin{proof}[Proof of Theorem \ref{thm:main}] By \cite[Theorem 1.12]{OOS}, there exists a finitely generated group $H$ such that every proper subgroup is infinite cyclic, while $H$ itself is not virtually cyclic. Moreover, for any finite generating set $S$ of $H$, \cite[Theorem 1.12]{OOS} states that every periodic path in the Cayley graph $\Ga(H,S)$ is a Morse quasi-geodesic. In particular, for every nontrivial $q\in H$, we have that $\langle q\rangle$ is Morse and hyperbolic, which implies that $\langle q\rangle$ is stable in $H$ by \cite[Proposition 4.3]{Tran}. 

Consider $G= H \ast \z$, where $y$ is the generator of $\z$. Then $G$ is finitely generated by $\{y \}\cup S$. Recall from Proposition \ref{prop:freeproduct} that $G$ is relatively hyperbolic with peripheral subgroup $H$. By Example \ref{ex:hypemb} and Proposition \ref{prop:relhypacyl}, $G$ is also acylindrically hyperbolic.  However, as the peripheral subgroup $H$ contains no non-abelian free subgroups, $H$ is not acylindrically hyperbolic (as such groups have independent loxodromics acting on a hyperbolic space, which allows one to construct a rank-two free group by the standard ping-pong argument; see for example \cite{Gromov,Osin}). It follows from Example \ref{ex:hypemb} and Theorem \ref{thm:largestrelhyp} that $G$ admits a largest acylindrical action on the relative Cayley graph $\Ga(G, \{y \}\sqcup H)$.

Recall that for every nontrivial $q\in H$, we have that $\langle q\rangle$ is stable in $H$. As $G$ is a free product, any path with endpoints in $\Ga(H,S)$, which also leaves $\Ga(H,S)$, must visit the same point twice. We therefore have that $\Ga(H,S)$ isometrically embeds into $\Ga(G,\{y \}\cup S)$. As $\langle q\rangle$ is a quasi-geodesic in $H$, we automatically get that $\langle q\rangle$ is a quasi-geodesic in $G$, and so is undistorted in $G$. It therefore follows from Corollary \ref{cor:transferstable} that $\langle q\rangle$ is stable in $G$. However, $\langle q \rangle$ is clearly not quasi-isometrically embedded in the relative Cayley graph $\Ga(G, \{y\}\sqcup H)$, as the orbit of $H$ is bounded therein, and $q$ has infinite order.
\end{proof}

\begin{rem}
    \label{rem:noacyl}
    In fact, no acylindrical action of $G$ on a hyperbolic space can recognize the stable subgroup $\langle q \rangle$. Indeed, if $G \acts X$ is an acylindrical action on a hyperbolic space, then it follows that the restriction of the action to $H$ is also acylindrical. As $H$ is not acylindrically hyperbolic, and is not virtually cyclic, the action of $H$ must have bounded orbits. Consequently $\langle q\rangle$ cannot be quasi-isometrically embedded in $X$, as that would imply that $H$ has an unbounded orbit, which is a contradiction. In other words, any positive answer to Question \ref{ques3} in this case would be via a non-acylindrical action.  Regarding Question \ref{ques4}, even if recognizing spaces exist for each individual stable subgroup of $G$, this means that some of the actions will not be acylindrical. 
\end{rem}

\begin{rem}
    Note that the stability of $\langle q\rangle$ in the above proof of Theorem \ref{thm:main} can also be seen directly. The fact that $\langle q\rangle$ is quasi-isometrically embedded in the Cayley graph $\Ga(G,\{y \}\cup S)$ is already addressed directly in the proof; one can also see directly that the uniform fellow-traveling of quasi-geodesics is satisfied. In particular, we may view $\Ga(G,\{y \}\cup S)$ in a manner similar to that in Figure \ref{fig:1b}, with the complete subgraph $\Ga(H,H)$ replaced with copies of $\Ga(H, S)$. This is still a space with a ``tree-like" structure. Suppose $\sigma$ is a quasi-geodesic in $\Ga(G,\{y \}\cup S)$ that starts and ends in $\langle q\rangle\subset \Ga(H,S)$, and let $\langle q \rangle_\sigma$ be the segment of $\langle q\rangle$ with the same endpoints as $\sigma$. The lengths of segments of $\sigma$ that leave $\Ga(H,S)$ are bounded uniformly (depending on the quasi-geodesic constants), as they must eventually return to the same point from where they left $\Ga(H,S)$. As the segments of $\sigma$ that remain in $\Ga(H,S)$ form a quasi-geodesic, and $\langle q\rangle_{\sigma}$ is itself a quasi-geodesic, by the stability of $\langle q\rangle$ in $H$ it follows that $\sigma$ stays uniformly close to $\langle q\rangle_{\sigma}$. The same is true of any other quasi-geodesic with the same constants and endpoints as $\sigma$, and therefore $\langle q\rangle$ is stable in $G$. 
\end{rem}

\begin{rem}\label{rem:genevois}
    We note that the only properties used about the group $H$ from \cite{OOS} is that it is finitely generated and contains Morse elements, but is neither acylindrically hyperbolic nor virtually cyclic. As very few such examples are known, it would be interesting to consider if there exist groups containing Morse elements with even worse ``un-hyperbolic-like" properties. For instance, does there exist a group $A$ (resp. finitely presented group $A$) with Morse elements with the Property $\mathrm{(NL)}$ (i.e. admitting no actions on hyperbolic spaces with loxodromic elements; see \cite{PropNL})? If so, Question \ref{ques4} (resp. Question \ref{ques5}) automatically will have a negative answer by considering the free product $G = A \ast \z$. As it is unclear if the group from \cite{OOS} has property $\mathrm{(NL)}$, we leave such explorations for future work. 
\end{rem}

%%%%%%%%%%%%%%%%%%%%%%%%%%%%%%%%%%%%
% BIBLIOGRAPHY
%%%%%%%%%%%%%%%%%%%%%%%%%%%%%%%%%%%%
\bibliographystyle{amsalpha}
\bibliography{bibclean}

\providecommand{\bysame}{\leavevmode\hbox to3em{\hrulefill}\thinspace}
\providecommand{\MR}{\relax\ifhmode\unskip\space\fi MR }
% \MRhref is called by the amsart/book/proc definition of \MR.
\providecommand{\MRhref}[2]{%
  \href{http://www.ams.org/mathscinet-getitem?mr=#1}{#2}
}
\providecommand{\href}[2]{#2}
\begin{thebibliography}{BBKL20}

\bibitem[ABD21]{MR4215647}
Carolyn Abbott, Jason Behrstock, and Matthew~Gentry Durham, \emph{Largest
  acylindrical actions and stability in hierarchically hyperbolic groups},
  Trans. Amer. Math. Soc. Ser. B \textbf{8} (2021), 66--104, With an appendix
  by Daniel Berlyne and Jacob Russell.

\bibitem[ABO19]{ABO}
Carolyn~R. Abbott, Sahana Balasubramanya, and Denis Osin, \emph{Hyperbolic
  structures on groups}, Algebr. Geom. Topol. \textbf{19} (2019), no.~4,
  1747--1835.

\bibitem[ADT17]{ADT}
Tarik Aougab, Matthew~Gentry Durham, and Samuel~J. Taylor, \emph{Pulling back
  stability with applications to {${\rm Out}(F_n)$} and relatively hyperbolic
  groups}, J. Lond. Math. Soc. (2) \textbf{96} (2017), no.~3, 565--583.

\bibitem[AM23]{AM}
Carolyn~R. Abbott and Jason~F. Manning, \emph{Acylindrically hyperbolic groups
  and their quasi-isometrically embedded subgroups}, Michigan Mathematical
  Journal (2023), 1 -- 46.

\bibitem[BBKL20]{BestvinaBrombergKentLeininger}
Mladen Bestvina, Kenneth Bromberg, Autumn~E. Kent, and Christopher~J.
  Leininger, \emph{Undistorted purely pseudo-{A}nosov groups}, Journal für die
  reine und angewandte Mathematik (Crelles Journal) \textbf{2020} (2020),
  no.~760, 213--227.

\bibitem[BFGS22]{PropNL}
Sahana~H. Balasubramanya, Francesco Fournier, Anthony Genevois, and Alessandro
  Sisto, \emph{Property $\mathrm{NL}$ for group actions on hyperbolic spaces},
  Accepted in Groups, Geometry and Dynamics (2022).

\bibitem[BH99]{Bridson1999}
Martin~R. Bridson and Andr\'{e} Haefliger, \emph{Metric spaces of non-positive
  curvature}, Springer, 1999.

\bibitem[BHS19]{Behrstock2019}
Jason Behrstock, Mark~F. Hagen, and Alessandro Sisto, \emph{Hierarchically
  hyperbolic spaces {II}: {C}ombination theorems and the distance formula},
  Pacific Journal of Mathematics \textbf{299} (2019), no.~2, 257--338.

\bibitem[Bow08]{Bowditch2008}
Brian~H. Bowditch, \emph{Tight geodesics in the curve complex}, Invent. Math.
  \textbf{171} (2008), no.~2, 281--300.

\bibitem[Bow12]{Bowditch}
B.~H. Bowditch, \emph{Relatively hyperbolic groups}, International Journal of
  Algebra and Computation \textbf{22} (2012).

\bibitem[CD17]{CordesDurham}
Matthew Cordes and Matthew~Gentry Durham, \emph{Boundary convex cocompactness
  and stability of subgroups of finitely generated groups}, International
  Mathematics Research Notices \textbf{2019} (2017), no.~6, 1699--1724.

\bibitem[Che22]{ChesserHandlebody}
Marissa Chesser, \emph{{Stable subgroups of the genus $2$ handlebody group}},
  Algebraic \& Geometric Topology \textbf{22} (2022), no.~2, 919 -- 971.

\bibitem[DGO17]{DGO}
F.~Dahmani, V.~Guirardel, and D.~Osin, \emph{Hyperbolically embedded subgroups
  and rotating families in groups acting on hyperbolic spaces}, Mem. Amer.
  Math. Soc. \textbf{245} (2017), no.~1156.

\bibitem[DMS10]{DMS}
Cornelia Drutu, Shahar Mozes, and Mark Sapir, \emph{Divergence in lattices in
  semisimple {L}ie groups and graphs of groups}, Transactions of the American
  Mathematical Society \textbf{362} (2010), no.~5, 2451--2505.

\bibitem[DT15]{DT}
Matthew Durham and Samuel~J Taylor, \emph{Convex cocompactness and stability in
  mapping class groups}, Algebraic \& Geometric Topology \textbf{15} (2015),
  no.~5, 2839--2859.

\bibitem[FM02]{FM}
Benson Farb and Lee Mosher, \emph{Convex cocompact subgroups of mapping class
  groups}, Geometry \& Topology \textbf{6} (2002), no.~1, 91--152.

\bibitem[Gen19]{Gen}
Anthony Genevois, \emph{Hyperbolicities in {CAT}(0) cube complexes},
  L'Enseignement Math\'{e}matique \textbf{65} (2019), no.~1--2, 33--100.

\bibitem[GM06]{GM}
Daniel Groves and Jason Manning, \emph{Dehn filling in relatively hyperbolic
  groups}, Israel Journal of Mathematics \textbf{168} (2006), 317–429.

\bibitem[Gro87]{Gromov}
M.~Gromov, \emph{Hyperbolic groups}, Essays in group theory, Math. Sci. Res.
  Inst. Publ., vol.~8, Springer, New York, 1987, pp.~75--263.

\bibitem[Ham05]{Hamenstaedt2005WordHE}
Ursula Hamenstaedt, \emph{Word hyperbolic extensions of surface groups},
  Preprint, arXiv:0505244 (2005).

\bibitem[Hru10]{MR2684983}
G.~Christopher Hruska, \emph{Relative hyperbolicity and relative quasiconvexity
  for countable groups}, Algebr. Geom. Topol. \textbf{10} (2010), no.~3,
  1807--1856.

\bibitem[Kim19]{HKim}
Heejoung Kim, \emph{Stable subgroups and morse subgroups in mapping class
  groups}, International Journal of Algebra and Computation \textbf{29} (2019),
  no.~05, 893--903.

\bibitem[KL08]{MR2465691}
Autumn~E. Kent and Christopher~J. Leininger, \emph{Shadows of mapping class
  groups: Capturing convex cocompactness}, GAFA Geom. funct. anal. \textbf{18}
  (2008), 1270--1325.

\bibitem[KMT17]{RAAGstable}
Thomas Koberda, Johanna Mangahas, and Samuel~J. Taylor, \emph{The geometry of
  purely loxodromic subgroups of right-angled {A}rtin groups}, Trans. Amer.
  Math. Soc. \textbf{369} (2017), no.~11, 8179--8208.

\bibitem[OOS09]{OOS}
Alexander~Yu. Ol'shanskii, Denis~V. Osin, and Mark~V. Sapir, \emph{Lacunary
  hyperbolic groups}, Geom. Topol. \textbf{13} (2009), no.~4, 2051--2140, With
  an appendix by Michael Kapovich and Bruce Kleiner.

\bibitem[Osi06]{OsinRelative}
D.~Osin, \emph{Relative {D}ehn functions of amalgamated products and
  hnn--extensions}, Contemporary Mathematics - Topological and Asymptotic
  Aspects of Group Theory \textbf{394} (2006), 209--220.

\bibitem[Osi16]{Osin}
\bysame, \emph{Acylindrically hyperbolic groups}, Trans. Amer. Math. Soc.
  \textbf{368} (2016), no.~2, 851--888.

\bibitem[Osi18]{osinsurvey}
\bysame, \emph{Groups acting acylindrically on hyperbolic spaces}, Proceedings
  of the {I}nternational {C}ongress of {M}athematicians---{R}io de {J}aneiro
  2018. {V}ol. {II}. {I}nvited lectures (2018), 919--939.

\bibitem[PSZ24]{CAT0}
Harry Petyt, Davide Spriano, and Abdul Zalloum, \emph{Hyperbolic models for
  {CAT(0)} spaces}, Adv. Math. \textbf{450} (2024).

\bibitem[PZ24]{PetytZalloum}
Harry Petyt and Abdul Zalloum, \emph{Constructing metric spaces from systems of
  walls}, Preprint, arXiv:2404.12057 (2024).

\bibitem[RST22]{Russell2022}
Jacob Russell, Davide Spriano, and Hung~Cong Tran, \emph{The local-to-global
  property for morse quasi-geodesics}, Mathematische Zeitschrift \textbf{300}
  (2022), 1557--1602.

\bibitem[Sel97]{Sela}
Z.~Sela, \emph{Acylindrical accessibility for groups}, Invent. Math.
  \textbf{129} (1997), no.~3, 527--565.

\bibitem[Sis16]{sistoHypEmbed}
Alessandro Sisto, \emph{Quasi-convexity of hyperbolically embedded subgroups},
  Math. Z. \textbf{283} (2016), no.~3-4, 649--658.

\bibitem[Tra19]{Tran}
Hung~Cong Tran, \emph{On strongly quasiconvex subgroups}, Geom. Topol.
  \textbf{23} (2019), no.~3, 1173--1235.

\bibitem[Zbi24]{Zbinden}
Stefanie Zbinden, \emph{Hyperbolic spaces that detect all strongly-contracting
  directions}, Preprint, arXiv:2404.12162 (2024).

\end{thebibliography}

\end{document}